\newcommand{\lV}{\vspace{-1mm}}
\newcommand{\lH}{\hspace{-1mm}}
\newcommand{\llH}{\hspace{-2mm}}
\newcommand{\T}[1]{\text{#1}}
\newcommand{\mi}{\T{--}}
\begin{document}

\title{Derivation and Generation of Path-Based Valid Inequalities for Transmission Expansion Planning}

\author{J. Kyle Skolfield \and Laura M. Escobar \and Adolfo R. Escobedo}

\institute{J. Kyle Skolfield \at School of Computing, Informatics, and Decision Systems Engineering (CIDSE), Arizona State University, Tempe, Arizona \\\email{kyle.skolfield@asu.edu} \and Laura M. Escobar \at Electrical Engineering Department, S\~ao Paulo State University (UNESP), Ilha Solteira, S\~ao Paulo, Brazil \\\email{lauramonicaesva@gmail.com} \and Adolfo R. Escobedo \at School of Computing, Informatics, and Decision Systems Engineering (CIDSE), Arizona State University, Tempe, Arizona, \\\email{adRes@asu.edu}}

\titlerunning{Path-based Valid Inequalities for Transmission Expansion Planning}

\maketitle

\begin{abstract}
This paper seeks to solve the long-term transmission expansion planning problem in power systems more effectively by reducing the solution search space and the computational effort. The proposed methodology finds and adds cutting planes based on structural insights about bus angle-differences along paths.  Two lemmas and a  theorem are proposed which formally establish the validity of these cutting planes onto the underlying mathematical formulations.  The path-based bus angle-difference constraints, which tighten the relaxed feasible region, are used in combination with branch-and-bound to find lower bounds on the optimal investment of the transmission expansion planning problem. This work also creates an algorithm that automates the process of finding and applying the most effective valid inequalities, resulting in significantly reduced testing and computation time.  The algorithm is implemented in Python, using CPLEX to add constraints and solve the exact DCOPF-based transmission expansion problem.  This paper uses two different-sized systems to illustrate the effectiveness of the proposed framework: a modified IEEE 118-bus system and a modified Polish 2383-bus system.
\end{abstract}

\keywords{
OR in energy, Mathematical Modeling, Mixed-integer Linear Programming, Transmission Expansion Planning, Valid Inequalities.
}

\acknowledgement{Laura M. Escobar's work is supported by the Brazilian institutions CAPES, CNPq (Grant NO. 142150/2015-0) and S\~ao Paulo Research Foundation--FAPESP (Grant NO. 2015/21972-6). The authors acknowledge Research Computing at Arizona State University for providing HPC resources that have contributed to the research results reported within this paper. URL: http://www.researchcomputing.asu.edu}

\section*{Nomenclature} 

\noindent \emph{Sets }:
\begin{labeling}{$g\in G$\hspace{20pt}}
\item[$n\in B$]            Buses (i.e., nodes)
\item[$(i,j)\in \Omega$] 			 Corridors (i.e., arcs)
\end{labeling}

\noindent \emph{Parameters}:
 \begin{labeling}{$\omega^{\max}_{ij}$\hspace{25pt}}
    \item[$c_{ij,k}$ ]                Cost of line $k$ in corridor $(i,j)$ 
    \item[$c_n$]                        Per unit cost of generation at bus $n$
    \item[$\omega^{0}_{ij}$]               Number of established lines in corridor $(i,j)$ 
    \item[$\overline{\omega}_{ij}$ ]     Maximum number of expansion lines in corridor $(i,j)$ 
    \item[$\overline{g}_{n}$]      Maximum limit of power generation at bus $n$
    \item[${d}_{n}$]               Active power demand at bus $n$
    \item[$\overline{\theta}_{ij}$]  Maximum bus angle-difference magnitude
    \item[$\overline{P}_{ij,k}$]        Capacity of candidate line $k$ in corridor $(i,j)$
    \item[$\overline{P}^0_{ij,k}$]      Capacity of existing line $k$ in corridor $(i,j)$
    \item[$x_{ij,k}$]                  Reactance of line $k$ in corridor $(i,j)$
    \item[$b_{ij,k}$]                 Susceptance of line $k$ in corridor $(i,j)$
    \item[$M_{ij}$]                       Large number (big-\textit{M}) used in the disjunctive constraints 
    \item[$\sigma$]                 Scaling factor to align generation and expansion costs
\end{labeling}

\noindent \emph{Continuous Variables}:
 \begin{labeling}{$P_{g}^{\max}, P_{g}^{\min}$\hspace{1pt}}
    \item[$P_{ij,k}^{0}$]  Active power flow in existing line $k$ in corridor $(i,j)$
    \item[$P_{ij,k}$]  Active power flow in candidate line $k$ in corridor $(i,j)$
    \item[$g_{n}$]  Active power output of generator in bus $n$
    \item[$\theta_{n}$]         Voltage angle at bus $n$
\end{labeling}

\noindent \emph{Binary Variables}:
\vspace{0pt}	
 \begin{labeling}{$P_{g}^{\max}, P_{g}^{\min}$\hspace{1pt}}
    \item[$y_{ij,k}$]  Decision to construct the $k^{th}$ candidate line in corridor $(i,j)$
\end{labeling}
\section{Introduction} \label{introduction}

\subsection{Background}

The objective of the Transmission-network Expansion Planning (TEP) problem is to find the least costly investment options in new transmission devices required to ensure proper power system operations into the future \citep{garv}. Optimizing this problem is important because the transmission network belongs to the so-called heavy technologies, which are both expensive and difficult to withdraw or relocate once they are installed \citep{Domtesis}. Inadequate long-term planning can lead to low service quality, excessive oversizing, inefficient  systems with high operating costs, and delays in the expansion of electricity markets. While new systems are growing in size and the demands imposed on them are increasing, deregulation and other challenges have made meeting those requirements ever more difficult \citep{lum16new}. Hence, it is critical to obtain solutions that maximize cost efficiency to enable the incorporation of more avant-garde technologies into the smart grid. For these reasons, it is necessary to devise new planning methodologies that can effectively deal with the associated combinatorial difficulties of the underlying TEP optimization models. 

In its standard form, TEP consists of linear and non-linear functions that include continuous variables (e.g., voltage angles, power flows, etc.) and integer variables (decisions to, e.g., add lines to the network). TEP can be formulated as a non-convex, mixed-integer nonlinear programming problem. It is NP-hard, which makes its solution generally intractable \citep{latorre}. This is exacerbated by the fact that in large-scale systems, the number of network components and associated restrictions can number in the hundreds or thousands. That is, the size and/or topology of the transmission network and the inclusion of discrete variables for representing possible transmission investments lead to a combinatorial explosion of potential solutions. Due to these complications, TEP cannot practically be solved using standard optimization techniques, in general. Different modeling techniques and algorithms have been proposed to expedite solution times (e.g., \cite{Zeng,Silva,Cabrera,Choi,TAP}). Exact methods require larger calculation times when compared to those required by metaheuristic techniques such as Tabu Search \citep{Gallego,Garcia2015} and Genetic Algorithms \citep{Gallego,Oliviera}, among others. However, the latter techniques generally do not provide formal optimality guarantees. In small- and medium-sized systems, the ideal solution can be found using methods such as branch-and-bound or branch-and-cut when a disjunctive integer linear programming model approximation is utilized \citep{Bahi,souasa,Di}.  Such methods provide formal guarantees, but they are demanding computationally.  They also include decomposition techniques, such as hierarchical Benders decomposition (e.g. \cite{Romero1994, bin01new, Haffner2001}).   Additionally, recent work has used Benders decomposition techniques to solve generation and transmission expansion planning together \citep{Jenabi2015}.  The valid inequalities presented in this paper can be seen as a complementary technique for solution time reduction to these exact methods.  
\lV

\subsection{Aim and Contributions}

This work considers a DCOPF-based mixed-integer programming version of the \textit{static} TEP problem which consists of a single investment period occurring at the beginning of the planning horizon and is a subproblem of the dynamic TEP problem. The choice of this model helps illustrate the computational intractability of TEP even for this basic context and is useful for various practical studies.  Moreover, it highlights the potential of the fundamental insights introduced herein to be extended to a variety of more complex TEP models with a similar core structure (e.g. \cite{bin01new,plo17ope,Vinasco}, etc.). Explicitly, this work derives and implements a set of theoretical contributions for detecting and including structural information on the underlying network which is relevant to any DCOPF-based model that incorporates the linear relationship between bus angle-differences and power flows (i.e., ``$B-\theta$" constraints) into the constraint set.  The insights presented in this paper may be applied to aid in solving a variety of problem classes, since this structural information is common to many power system formulations.  Such insights are captured via the concept of valid inequalities, which represent one of the most effective exact solution techniques and are a highly active research area in mathematical programming \citep{conforti2014integer}. 

Other papers have explored structural insights based on bus angle-differences, which serve as the inspiration of this work.  In particular, in \cite{Laura1}, a subset of the classes of valid inequalities introduced in this paper were applied in an ad hoc manner, in particular, only those from the herein included lemmas, which are proved in the present paper for the first time. Moreover, while the lemmas are helpful in providing insight for the major theorems derived in this paper, and their implementation could produce coincidental improvement in CPLEX due to the ordering of constraints, it can be shown that it is analytically impossible for them to reduce the linear relaxation solution space of TEP. This is because the valid inequalities presented in these lemmas are obtained as linear combinations of the original set of constraints. In short, the cited work lacked the systematic and theoretical depth featured in this work from an operations research perspective. The present paper automates and extends the process of that work.  Specifically, it formally establishes the validity of two classes of valid inequality used therein via two lemmas, plus one additional class, proved via a theorem, which can in fact reduce the solution space of the linear relaxation.

In addition to these theoretical contributions, this work also provides techniques for applying the theory in the form of a heuristic algorithm used to help find the more effective candidate valid inequalities (also referred to herein as cuts).  These techniques are then used to perform computational experiments that show the effectiveness of the proposed valid inequalities in reducing the solution time of two modified benchmark instances.  While their effectiveness is shown herein for static TEP, the reduction in solution time would be amplified in, for example, stochastic programming approaches to TEP. In these approaches, many scenarios need to be solved with each using the same collection of valid inequalities, since the first-stage decisions usually involve the structure of the network. A similar argument holds for solving the multi-period TEP.

The structure of the paper is as follows:  Section 2 introduces the disjunctive model used for modeling TEP.  Section 3 presents the key insights and intuition for deriving and generating the valid inequalities. Section 4 contains the main contribution of this work, the lemmas and theorem which prove the validity of the discussed cuts.  Section 5 presents numerical results from testing the application of these theorems to three different test cases, and Section 6 summarizes the conclusions drawn from these results.

\section{Modeling Framework}

The nonlinear ACOPF model for TEP can be transformed into a mixed-integer linear model with bilinear equations \citep{zhang2013transmission}. This model is itself transformed into a disjunctive model with binary variables, which is always possible using a large enough disjunctive coefficient (big-\textit{M}). In the disjunctive model, a binary variable is considered for each candidate line, which converts the original mixed-integer non-linear program into a mixed-integer linear program (MILP).  The DCOPF-based model is appropriate for TEP.  First, it is widely used in industrial practice, especially for planning purposes \citep{kocuk2016cycle}.  Additionally, this approach is the most common classical optimization approach in the literature \citep{lum16new}.  Finally, long-term planning is primarily concerned with active power rather than reactive power, and consequently the assumption in DCOPF that active power is much larger than reactive is reasonable.  The main concerns that are only captured with an AC model (e.g., stability of the network) can be incorporated in a more short-term, operational perspective \citep{lumbreras2014automatic}.  The full model is as follows.

The objective function \eqref{eq_disyuntivo_r_1} is to minimize the joint cost of generation and investments in new lines, with investment considered to be performed at the beginning of the planning horizon:
\begin{align}
& \hspace{0pt}\displaystyle  \min  \sum_{(i,j) \in \Omega}\sum_{k=1}^{\bar{\omega}_{ij}} c_{ij,k}y_{ij,k} + \sum_{n \in B} \sigma c_n g_n .& \label{eq_disyuntivo_r_1} 
\end{align}
Here, $c_{ij,k}$ is the cost of each line in corridor $(i,j)$ and binary variable $y_{ij,k}$ represents the decision to add the $k^{th}$ candidate line in corridor $(i,j)$.  When $y_{ij,k}= 1$, the $k$th candidate line is added in corridor $(i,j)$.  Additionally, $\bar{\omega}_{ij}$ is the maximum number of candidate lines considered in corridor $(i,j)$, and $\Omega$ is the set of expansion corridors in the expansion plan.  Finally, note that the generation costs in the objective function are weighted by a factor $\sigma$ to make generation costs and planning costs comparable \citep{min18sol}.  The set of constraints is as follows:
\lV
\begin{flalign}	
& \hspace{0pt}\displaystyle  \mathrlap{\sum_{(n,i)\in\Omega} \left(\sum_{k=1}^{\omega_{ij}^0}P_{ni,k}^{0} + \sum_{k=1}^{\bar{\omega}_{ij}} P_{ni,k} \right) -  \sum_{(i,n)\in\Omega} \left(\sum_{k=1}^{\omega_{ij}^0}P_{in}^{0} + \sum_{k=1}^{\bar{\omega}_{ij}} P_{in,k} \right) + g_{n} = d_{n}} &\forall n \in B \label{eq_disyuntivo_r_2} \\
& \hspace{0pt}\displaystyle - \overline{P}^0_{ij,k} \leq P_{ij,k}^{0} \leq \overline{P}^0_{ij,k} &\forall(i,j) \in \Omega, k \in \{1 \dots \omega_{ij}^0\}  \label{eq_disyuntivo_r_3}  \\
& \hspace{0pt}\displaystyle  - \overline{P}_{ij,k} y_{ij,k} \leq P_{ij,k} \leq \overline{P}_{ij,k} y_{ij,k} &\forall(i,j) \in \Omega , k \in  \{1 \dots \bar{\omega}_{ij}\}  \label{eq_disyuntivo_r_4} \\
& \hspace{0pt}\displaystyle  \frac{\mi 1}{b_{ij,k}}P_{ij,k}^{0} - (\theta_{i}-\theta_{j})= 0 &\forall(i,j) \in \Omega, k \in \{1 \dots \omega_{ij}^0 \} \label{eq_disyuntivo_r_5} \\
& \hspace{0pt}\displaystyle \mathrlap{ \mi M_{ij}(1-y_{ij,k}) \leq \frac{\mi 1}{b_{ij,k}}P_{ij,k}-(\theta_{i}-\theta_{j}) \leq M_{ij}(1-y_{ij,k})} &\forall(i,j) \in \Omega , \; k \in \{ 1 \dots \bar{\omega}_{ij}\}\label{eq_disyuntivo_r_6}  \\
& \hspace{0pt}\displaystyle  g_{n} \leq \overline{g}_n   &\forall n \in B & \label{eq_disyuntivo_r_9} \\
& \hspace{0pt}\displaystyle  -\overline{\theta} \leq \theta_{i}-\theta_{j} \leq \overline{\theta} &\forall (i,j) \in \Omega  \label{eq_disyuntivo_r_10a} \\
& \hspace{0pt}\displaystyle  y_{ij,k}  \in  \left\{  0,1  \right\}  &\forall(i,j) \in \Omega ,  k \in \{ 1 \dots \bar{\omega}_{ij}\}   \label{eq_disyuntivo_r_11} \\
& \hspace{0pt}\displaystyle  g_n \geq 0, \; \theta_{n} \; \mbox{unr.}  &\forall n \in B \label{eq_disyuntivo_r_12} \\
& \hspace{0pt}\displaystyle P_{ij,k}^{0},P_{ij,k} \; \mbox{unr.} &\forall (i,j) \in \Omega, k \in \{1 \dots \bar{\omega}_{ij}\} 
\label{eq_disyuntivo_r_13}
\end{flalign}

The constraints start with \eqref{eq_disyuntivo_r_2}, which interrelates the active power flows that arrive at and leave bus $n$ through both existing and candidate lines and the demand and supply of active power at bus $n$.  \eqref{eq_disyuntivo_r_3} represents the limit of active power flow through the current network in corridor $(i,j)$, where $P_{ij,k}^{0}$ is the power flow in the $k^{th}$ existing line.  \eqref{eq_disyuntivo_r_4} represents the limit of active power flow through the candidate lines in corridor $(i,j)$, while \eqref{eq_disyuntivo_r_5} and \eqref{eq_disyuntivo_r_6} show the link between the active power flows of a corridor $(i,j)$ and the bus angle-difference between incident buses $i$ and $j$. Equations \eqref{eq_disyuntivo_r_5} and \eqref{eq_disyuntivo_r_6} both represent Kirchhoff's second law, either for each existing line or each candidate line to be added to the transmission system, respectively.   \eqref{eq_disyuntivo_r_6} becomes active when the decision variable $y_{ij,k}$ takes the value of 1, i.e. when that candidate line is built. Otherwise, a sufficiently large big-\textit{M} parameter $M_{ij}$ ensures that \eqref{eq_disyuntivo_r_6} is extraneous for the model.  Finding the best value for $M_{ij}$ is a shortest path problem in connected networks, but a longest path problem in disconnected networks, which is itself an NP-hard problem \citep{bin01new}.  Because power networks are generally connected including in the instances tested herein, except for perhaps a handful of considered buses, these problems are solved while pre-processing the networks, in order to use the best possible big-\textit{M} parameter for each pair of buses.   \eqref{eq_disyuntivo_r_9} presents the limits of the active power supply for the generators, where a bus $n$ with no generator is assumed to have $\bar{g}_n =0$.  
\eqref{eq_disyuntivo_r_10a} enforces the maximum bus angle-difference for adjacent bus-pairs $(i,j) \in \Omega$, i.e. those bus-pairs connected by a corridor. Finally, \eqref{eq_disyuntivo_r_11}, \eqref{eq_disyuntivo_r_12} and \eqref{eq_disyuntivo_r_13} give the variable domains. 

\section{Motivating the Derivation and Generation of Path-based Valid Inequalities}

Due to the combinatorial explosion of TEP, it is not possible to find an optimal solution for large-scale systems using standard, off-the-shelf algorithms. The computational difficulty of the problem is related directly to the size of the system to be analyzed.  However, other factors increase computational difficulty, including the connectivity of the buses or how well the system is enmeshed.  This is complicated by the ``Braess Paradox,'' according to which a more inefficient system can be obtained when adding lines to the transmission system \citep{neil}.  

To solve NP-hard problems, it is often useful to investigate the structural characteristics of a particular instance. This knowledge can be highly valuable when it comes to designing effective exact solution methods \citep{nem88int,conforti2014integer}.  One key application of this knowledge is to derive valid inequalities (VIs): additional problem constraints that preserve the original solution space $\mathcal{P}$ but may otherwise reduce an associated relaxed solution space $\mathcal{P}^R\subseteq\mathbb{R}^{n}$, where $\mathcal{P}\subset\mathcal{P}^R$.  Formally, for the set $\mathcal{P} \subset \mathbb{R}^n$, the coefficient vector ${\boldsymbol{\pi}}=(\pi_1,\dots,\pi_n)\in\mathbb{R}^{n}$, and the constant $\pi_0 \in \mathbb{R}$, the inequality $\boldsymbol{\pi}\mathbf{y} \leq \mathbf{\pi}_0$ is called a \textit{valid inequality} for $\mathcal{P}$ if it is satisfied by all points $\mathbf{y} \in \mathcal{P}$ (i.e., herein, $\mathcal{P}$  is the TEP solution space). Because the solution of MILP typically proceeds by solving a sequence of linear relaxations, adding structurally useful VIs as cutting planes can reduce the number of such linear problems solved in a branch-and-bound framework, thus decreasing the computational time necessary to solve the overall problem \citep{nem88int}. The proposed method seeks to provide mechanisms that reduce the size of the solution space by incorporating structural information of TEP that can eliminate unpromising settings of decision variables. 

The structural insights derived in this work stem from the relationships between the bus angle and flow decision variables that characterize DCOPF-based transmission system models. Specifically, if there is an existing line with index $k$ in corridor $(i,j)\in\Omega$, with $i<j$, an \textit{angular VI} relating the difference between $\theta_i$ and $\theta_j$ can be obtained through $P_{ij,k}$ (the flow along the line), as follows:
\begin{equation}
\theta_i-\theta_j=\frac{-1}{b_{ij,k}}P_{ij,k}=x_{ij,k}P_{ij,k}\le x_{ij,k}\bar P_{ij,k},\label{eq_busAng_diff}
\end{equation}
where $x_{ij}$ and $\bar P_{ij,k}$ are the line reactance and flow capacity, respectively. The right hand side of this inequality is referred to henceforth as a \textit{capacity-reactance product} and may be useful for improving angular VIs as presented here.  Note that \eqref{eq_busAng_diff} is a direct result of \eqref{eq_disyuntivo_r_4}-\eqref{eq_disyuntivo_r_6}. The present work leverages such adjacent-bus VIs to derive formal restrictions on non-adjacent buses and on buses connected via multiple parallel paths in the network. That is, the TEP model (and the DCOPF model, generally) provides only simple angular constraints for the buses that are directly connected via a transmission line. However, by forming a single path connecting adjacent buses in the transmission network, these VIs can be combined into potentially tighter \textit{path-based} constraints relating the initial bus angle and the terminating bus angle of said path and the corresponding flow restrictions of each corridor along the path. Even stronger restrictions may be obtained from the combination of VIs along parallel paths---two otherwise disjoint paths which share initial and terminating buses---by taking the tighter of the separate bus angle-difference restrictions or, equivalently, flow restrictions. An example application of these insights is illustrated in Figure \ref{fig:fig1} via a stylized bus-line diagram consisting of bus set $B=\{i_0,i_1,i_2\}$, corridor set $\Omega=\{(i_0,i_1),(i_0,i_2),(i_1,i_2)\}$, and single lines between each pair of buses with reactances $x_{i_0,i_1}=x_{i_1,i_2}=x, x_{i_0,i_2}=3x$ and capacities $\bar P_{i_0,i_1}=\bar P_{i_1,i_2}=\bar P_{i_0,i_2}=\bar P$.  For this simple example, and for all future numerical examples, we assume that there can be at most one existing line and at most one candidate line per corridor. This allows us to increase visual clarity by dropping the third index of each variable.

\begin{figure}[h]
\centering 
\begin{tikzpicture}
\draw [very thick] (0,0) coordinate  -- (0,1) coordinate ;
\draw [very thick] (6,0) coordinate -- (6,1) coordinate;
\draw [very thick] (3,-2) coordinate  --  (3,-1.1) coordinate;
\draw (0,0.6) coordinate --(6,0.6) coordinate;
\draw (0,0.3) coordinate --(0.6,0.3) coordinate;
\draw (5.4,0.3) coordinate --(6,0.3) coordinate;
\draw (2.4,-1.5) coordinate --(3,-1.5) coordinate;
\draw (0.6,0.3) coordinate --(2.4,-1.5) coordinate;
\draw (3,-1.5) coordinate --(3.6,-1.5) coordinate;
\draw (3.6,-1.5) coordinate --(5.4,0.3) coordinate;

\node at (-0.3,0.9)  (i0) {$i_0$};

\node [right] at (6,0.9) {$i_2$};
\node [right] at (3,-1.9) {$i_1$};
\node [above] at (3,-2.85)[rectangle,draw] {VI'$\lH_{0\mi2}:\theta_{i_0}\lH-\theta_{i_2}\le 2x\bar P$};
\node [above] at (3,0.7) {VI$_{0\mi2}:\theta_{i_0}\lH-\theta_{i_2}\le 3x\bar P$};
\node [left] at (1.75,-1.2) {VI$_{0\mi1}:\theta_{i_0}\lH-\theta_{i_1}\le x\bar P\llH$};
\node [right] at (4,-1.2) {VI$_{1\mi2}:\theta_{i_1}\lH-\theta_{i_2}\le x\bar P$};
\end{tikzpicture}
\caption{The two path-based VIs adjacent to lines ($i_0,i_1$) and ($i_1,i_2$) can be combined to create the bottom boxed path-based VI, which tightens the path-based VI atop line ($i_0,i_2$).}
\label{fig:fig1}
\end{figure}
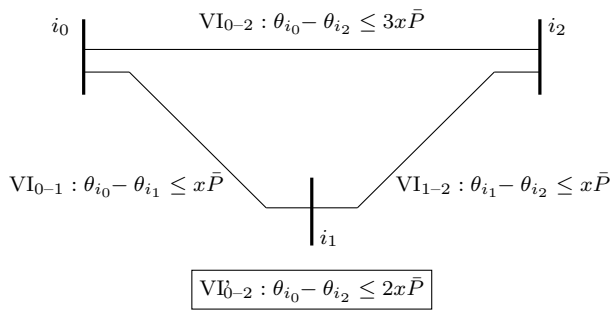

In Figure \ref{fig:fig1}, three path-based VIs (adjacent to each transmission line) are obtained by considering the capacity-reactance products of every pair of buses in the network (see \eqref{eq_busAng_diff}). Moreover, by combining two of these VIs, a tighter VI for bus angles $\theta_{i_0}$ and $\theta_{i_2}$ is obtained (see the boxed expression). It is important to remark that this constraint would be valid even in the absence of a direct transmission line between $\theta_{i_0}$ and $\theta_{i_2}$, i.e. if it were an expansion corridor. In larger networks, many such VIs can be constructed, which may or may not tighten the model's simple bus angle-difference constraints. In electric systems with high mesh levels, the number of parallel paths can increase exponentially, depending on the specific network properties \citep{kav09cyc}. Consequently, it may be prohibitive to identify and verify the strength of each possible VI for large-size systems. Instead, this work will identify the most effective of these constraints and provide data-driven insights through the use of relaxation models that are easier to solve.

We use the above ideas to generate a set of structurally useful VIs based on single paths and parallel paths that may appear in the solution to TEP.  To this end, we make use of three relaxed models.  By solving a subset of these models, each of which takes significantly less time to solve than the full MILP, we can generate a set of \textit{structural backbones}. These are solution patterns that suggest single paths and parallel paths that are more likely to occur than others in the solution to the original problem. In particular, we consider adding a VI based on any single path or collection of parallel paths which flows in the same direction in the solution to each of a combination of relaxation models.  The technique of using these relaxation models in this way will be denoted the \textit{low-effort heuristic}, first implemented in a non-algorithmic way in \cite{Laura1}. Three models are used: the linear model, where the restriction on the binary variables $y_{ij,k}$ is relaxed, allowing them to be continuous within the interval $[0,1]$; the transportation model, where the restriction that flows on all lines obey \eqref{eq_disyuntivo_r_5} and \eqref{eq_disyuntivo_r_6} is relaxed; and the hybrid model, which is similar to the transportation model, but in which only \eqref{eq_disyuntivo_r_6} is relaxed.

\section{Path-based Angular Valid Inequalities Derivation and Theorems}

\begin{figure}[H]
\centering
\begin{tikzpicture}[>=stealth',shorten >=1pt,auto,node distance=2cm, thick,main node/.style={circle,draw,font=\bfseries}]
\node at (3,2.3) [circle,draw] (i5) {$i_4$};
\node at (5,2.6) [circle,draw] (i6) {$i_5$};
\node at (7,3) [circle,draw] (i7) {$i_6$};
\node at (2,3.2) [circle,draw] (i0) {$i_0$};
\node at (3.2,4.2) [circle,draw] (i1) {$i_1$};
\node at (5.6,4.8) [circle,draw] (i2) {$i_2$};
\node at (8.8,3.7) [circle,draw] (i3) {$i_3$};
\draw [thick] (i0) -- (i1);
\draw [thick] (i0) -- (i5);
\draw [thick] (i1) -- (i2);
\draw [dashed] (i1) -- (i6);
\draw [dashed] (i2) -- (i3);
\draw [thick] (i2) -- (i6);
\draw [thick] (i5) -- (i6);
\draw [dashed] (3.51,4.4) -- (5.3,4.88);
\draw [thick] (i6) -- (i7);
\draw [thick] (i7) -- (i3);
\end{tikzpicture}
\caption{Toy Network Used to Illustrate Theorems}
\label{fig:fig2}
\end{figure}
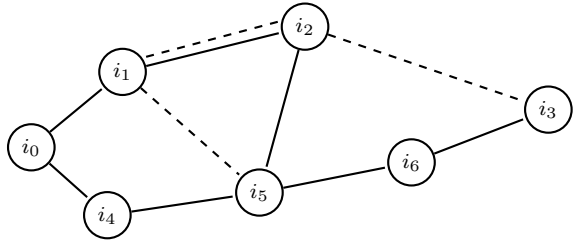

This section will introduce the theorems which are the fundamental contribution of this work. For this purpose, a graph with candidate lines (dotted edges) and existing lines (solid edges) is presented in Figure \ref{fig:fig2}. An example of these lines can be seen between buses $i_1$ and $i_2$, where there is one candidate line and one existing line. This graph will be used to illustrate an application of each lemma and theorem.

We say then that $(i,j)$ is an \textit{established corridor} of $G$ if $\omega^0_{i,j}>0$; otherwise we say that $(i,j)$ is an \textit{expansion corridor}.  To better clarify instances when we must distinguish individual lines within each corridor along a path, we introduce the vector $\hat{k}_\rho = \left< k_{i_0i_1}, \dots , k_{i_{|\rho|-1}i_{|\rho|}}\right> \subseteq \left< \{1, \dots, \omega^0_{i_0i_1} \}, \dots , \{ 1, \dots, \omega^0_{i_{|\rho|-1}i_{|\rho|}} \} \right> $ to denote any vector of valid line-indices $k_{ij}$ within each established corridor $(i,j)$ along a path $\rho$. Then, for ease of presentation, we refer to $x_{ij,k}$, where $k$ encapsulates a valid setting of element $ij$ of vector $\hat{k}$, i.e. $k\in \{1, \dots, \omega_{ij}^0\}$.  Thus, in each upcoming lemma and proof, whenever $k$ is used as a line index, it is shorthand for $k_{ij}$ when there is no ambiguity.  
Additionally, because these problems traditionally specify corridors from a lower index bus to a higher index bus, we define $\tilde{P}_{ij,k} = sgn(j - i) \cdot P_{ij,k}$, where $\textit{sgn}(i-j) = 1$ if $i > j$ and $\textit{sgn}(i-j) =-1$ if $i < j$.  Define $\tilde{P}^0_{ij,k}$ analogously for $P^0_{ij,k}$.

\subsection{Single Path over Established Corridors}
\begin{lemma}
Let $\rho=(i_0,i_1),\dots,(i_{\left|\rho\right|-1},i_{\left|\rho\right|})$ represent a directed path over established corridors in $G$. For $(i,j) \in \rho$, set coefficient vector $\boldsymbol{\pi}=(\pi_0,\pi_1,\dots,\pi_{\left|\rho\right|})\in\mathbb{R}^{\left|\rho\right|+1}$ as,
\begin{equation}\label{eqn:pi}
\pi_j= 
\begin{cases}
 \sum_{(i,m) \in \rho}x_{im,k}\cdot \overline{P}^0_{im,k} , & \scriptstyle \textit{if } j=0\\
 \textit{sgn}(i-j)x_{ij,k} , & \scriptstyle \textit{otherwise}
\end{cases},
\end{equation} 
where $k \in \{1, \dots , \omega_{ij}^0\}$ is fixed for each corridor $(i,j)$, but may vary between corridors.  Then the following two-sided inequality is valid for TEP for any $\hat{k}_\rho$:
\begin{flalign}
-\pi_0\le\sum_{(i,j) \in \rho}\pi_j \tilde{P}^0_{ij,k} \le\pi_0.\label{eqn:VI-1}
\end{flalign}
\end{lemma}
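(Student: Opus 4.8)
The plan is to recognize the two‑sided inequality \eqref{eqn:VI-1} as a term‑by‑term aggregation of the flow‑capacity bounds \eqref{eq_disyuntivo_r_3}, one per corridor of $\rho$, each scaled by the positive line reactance $x_{ij,k}$. Since $\rho$ runs over \emph{established} corridors, every index $k=k_{ij}$ appearing in a choice of $\hat k_\rho$ is a valid existing‑line index (i.e.\ $k\in\{1,\dots,\omega^0_{ij}\}$), so \eqref{eq_disyuntivo_r_3} is available for it; this is the only place the ``established'' hypothesis is used.

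First I would fix an arbitrary point feasible for TEP together with an arbitrary valid index vector $\hat k_\rho$, and, for each corridor $(i,j)\in\rho$, unwind the definitions: $\pi_j=\textit{sgn}(i-j)\,x_{ij,k}$ from \eqref{eqn:pi} and $\tilde P^0_{ij,k}=\textit{sgn}(j-i)\,P^0_{ij,k}$, so the two sign factors cancel and $\pi_j\tilde P^0_{ij,k}=-\,x_{ij,k}P^0_{ij,k}$ (read in the model's canonical low‑to‑high orientation of the corridor, using that reactance and existing‑line capacity do not depend on orientation). Combining this with $|P^0_{ij,k}|\le\overline P^0_{ij,k}$ from \eqref{eq_disyuntivo_r_3} and multiplying by $x_{ij,k}>0$ yields, for each corridor of $\rho$, the two‑sided bound $-x_{ij,k}\overline P^0_{ij,k}\le\pi_j\tilde P^0_{ij,k}\le x_{ij,k}\overline P^0_{ij,k}$. (Structurally, substituting the bus‑angle/flow equalities \eqref{eq_disyuntivo_r_5} shows each summand $\pi_j\tilde P^0_{ij,k}$ is the signed bus‑angle increment across corridor $(i,j)$, whose magnitude is at most the capacity‑reactance product of \eqref{eq_busAng_diff}; this explains why the inequality is ``path‑based'', but it is not needed for validity.)

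Second, I would add these $|\rho|$ two‑sided inequalities. The middle sum is precisely $\sum_{(i,j)\in\rho}\pi_j\tilde P^0_{ij,k}$, the central expression of \eqref{eqn:VI-1}, while both outer sums collapse to $\sum_{(i,m)\in\rho}x_{im,k}\overline P^0_{im,k}=\pi_0$ by the $j=0$ case of \eqref{eqn:pi}. Hence $-\pi_0\le\sum_{(i,j)\in\rho}\pi_j\tilde P^0_{ij,k}\le\pi_0$ at the chosen feasible point. Since the point and the index vector $\hat k_\rho$ were arbitrary, \eqref{eqn:VI-1} holds throughout the TEP feasible region for every $\hat k_\rho$, which is the claim.

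The one step requiring care is the sign accounting in the second paragraph: one must verify that $\pi_j\tilde P^0_{ij,k}=-x_{ij,k}P^0_{ij,k}$ holds uniformly, i.e.\ that the $\textit{sgn}(i-j)$ factor built into $\pi_j$ and the reorientation factor inside $\tilde P^0_{ij,k}$ cancel regardless of whether $\rho$ traverses a corridor from its lower‑ to its higher‑indexed endpoint or the other way. Everything past that point is merely summing box constraints already present in the formulation, which is also the reason (noted in the paper) that this class of inequalities, taken alone, cannot tighten the LP relaxation of TEP.
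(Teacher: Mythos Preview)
Your proof is correct. At its core it coincides with the paper's: bound each summand $\pi_j\tilde P^0_{ij,k}$ in absolute value by the capacity--reactance product $x_{ij,k}\overline P^0_{ij,k}$ via \eqref{eq_disyuntivo_r_3}, then sum to recover $\pi_0$. Your sign-cancellation check, $\pi_j\tilde P^0_{ij,k}=\textit{sgn}(i-j)\,\textit{sgn}(j-i)\,x_{ij,k}P^0_{ij,k}=-x_{ij,k}P^0_{ij,k}$, is exactly the bookkeeping needed, and you are right that it is insensitive to the direction in which $\rho$ traverses each corridor.

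The difference from the paper is that you deliberately bypass the bus-angle step. The paper first invokes \eqref{eq_disyuntivo_r_5} to rewrite each $\pi_j\tilde P^0_{ij,k}$ as a consecutive bus-angle difference, telescopes these along $\rho$ to obtain the identity $\theta_{i_{|\rho|}}-\theta_{i_0}=\sum_{(i,j)\in\rho}\pi_j\tilde P^0_{ij,k}$, and only then applies the same absolute-value bound from \eqref{eq_disyuntivo_r_3}. For Lemma~1 in isolation, your route is more economical and makes transparent (as you note) that the inequality is a nonnegative combination of box constraints already in the model. What the paper's detour buys is the telescoped identity itself: that equation is the key ingredient in the proof of Lemma~2, where one must equate the flow expressions along parallel paths via their common endpoint angle difference. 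So the two proofs agree on the bounding step; the paper front-loads an identity that it immediately re-uses, whereas you mention it only as structural context.
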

\begin{proof}  According to (\ref{eq_disyuntivo_r_5}), the flow along any fixed, existing line $k$ of corridor $(i,j)$ is given by

\begin{equation}
\tilde{P}_{ij,k}^0 = sgn(i - j)b_{ij,k}(\theta_{i} - \theta_{j}), \label{eqn:flow-1}
\end{equation}
or equivalently,

\begin{equation}
(\theta_{i} - \theta_{j}) = \pi_j \tilde{P}_{ij,k}^0 , \label{eqn:flow-1_react}
\end{equation}
where $(i,j) \in \rho$. Hence, the bus angle-difference for consecutive bus-pairs $(i_{0}, i_1)$, $(i_1,i_2)$, $(i_2,i_3)$, $\dots$, $(i_{|\rho|-1},i_{|\rho|})$ in $\rho$ can be written as: 
\begin{flalign*}
{\theta _{{i_1}}} - {\theta _{{i_0}}} &= sgn\left( {{i_0} - {i_1}} \right)  x_{{i_0 i_1},k} \tilde{P}_{{i_0i_1},k}^0 = \pi_1 \tilde{P}_{{i_0i_1},k}^0,\\
{\theta _{{i_2}}} - {\theta _{{i_1}}} &= sgn\left( {{i_1} - {i_2}} \right) x_{{i_1}{i_2},k} \tilde{P}_{{i_1}{i_2},k}^0 = \pi_2 \tilde{P}_{{i_1}{i_2},k}^0,\\
&\;\;\;\;\;\;\;\;\;\;\;\;\;\;\;\;\;\;\;\;\; \vdots \\
{\theta _{{i_{|\rho |}}}} - {\theta _{{i_{|\rho | - 1}}}} &= sgn\left( {{i_{|\rho |-1}} - {i_{|\rho |}}} \right)  x_{{i_{|\rho | - 1}}{i_{|\rho |}},k}  \tilde{P}_{{i_{|\rho | - 1}}{i_{|\rho |}}}^0 \\
\;\;\;\;\;\;\;\;\;\;\;\;\;\;\;\;\;\;\;\;\, &= {\pi_{| \rho |}} \tilde{P}_{{i_{|\rho | - 1}}{i_{|\rho |}},k}^0 .
\end{flalign*}
\vspace{3pt}
 When these equations are summed, this creates a telescoping effect on the left-hand side, which yields the following bus angle-difference equation for the starting and ending buses in $\rho$:
 \lV
\begin{flalign}
  \theta_{i_{|\rho|}}-\theta_{i_0} &=\sum_{(i,j) \in \rho} \pi_j {\tilde{P}}^0_{ij,k}\label{eqn:VI-E-pt1}\\
  &\le\sum_{(i,j) \in \rho}\left|\pi_j {\tilde{P}}^0_{ij,k}\right|\label{eqn:VI-E-pt2} \\ 
  &\le\sum_{(i,j) \in \rho} x_{ij,k}  \overline{P}^0_{ij,k} = \pi_0, \label{eqn:VI-E-pt3}
\end{flalign}
where the latter inequality is obtained by adding the rightmost inequalities from  \eqref{eq_disyuntivo_r_3}. By a similar argument we have that,
\begin{flalign}
\sum_{(i,j) \in \rho}\pi_j {\tilde{P}}^0_{ij,k} &\ge  - \sum_{(i,j) \in \rho}\left|\pi_j {\tilde{P}}^0_{ij,k}\right|\\
&\ge - \sum_{(i,j) \in \rho} x_{ij,k}  \overline{P}^0_{ij,k}\\
&=  - {\pi _0}.\label{eqn:VI-E-pt4}
\end{flalign}

Since every corridor considered has at least one existing line to select and fix as $k$, and \eqref{eqn:flow-1} holds for any line in corridor $(i,j)$, we have established the validity of \eqref{eqn:VI-1}. ${\qed}$
\end{proof}

As an example using Figure \ref{fig:fig2} the path $\rho^2:=(i_0, i_4), (i_4, i_5)$ creates the example type 1 two-sided VI:\vspace{3pt}

$\begin{array}{l}
-{{\overline{P}}_{{i_0},{i_4}}}\,{x_{{i_0},{i_4}}} - {{\overline{P}}_{{i_4},{i_5}}}\,{x_{{i_4},{i_5}}}\\
 \le {P_{{i_0},{i_1}}}\,{x_{{i_0},{i_1}}} + {P_{{i_1},{i_2}}}\,{x_{{i_1},{i_2}}}  \\
 \le \,{{\overline{P}}_{{i_0},{i_4}}}\,{x_{{i_0},{i_4}}} + {{\overline{P}}_{{i_4},{i_5}}}\,{x_{{i_4},{i_5}}} 
\end{array}$

On the same note, in Figure \ref{fig:fig2}, the path $\rho^1:=(i_0, i_1), (i_1, i_2), (i_2,i_5)$ is an established path, which creates the example type 1 two-sided VI:

\vspace{3pt}
$\begin{array}{l}
-{{\overline{P}}_{{i_0},{i_1}}}\,{x_{{i_0},{i_1}}} - {{\overline{P}}_{{i_1},{i_2}}}\,{x_{{i_1},{i_2}}} - {{\overline{P}}_{{i_2},{i_5}}}\,{x_{{i_2},{i_5}}} \\
\le {P_{{i_0},{i_1}}}\,{x_{{i_0},{i_1}}} + {P_{{i_1},{i_2}}}\,{x_{{i_1},{i_2}}} + {P_{{i_2},{i_5}}}\,{x_{{i_2},{i_5}}} \\
 \le \,{{\overline{P}}_{{i_0},{i_1}}}\,{x_{{i_0},{i_1}}} + {{\overline{P}}_{{i_1},{i_2}}}\,{x_{{i_1},{i_2}}} + {{\overline{P}}_{{i_2},{i_5}}}\,{x_{{i_2},{i_5}}} 
\end{array}$

\subsection{Parallel Paths over Established Corridors}
\begin{lemma}
Let $\rho^1,\dots,\rho^m$ represent $m>1$ alternative directed paths over established corridors in $G$ with the same starting/ending buses but with non-overlapping intermediate buses; that is, $i^r_0=i^{r'}_0$, $i^r_{|\rho^r|}=i^{r'}_{|\rho^{r'}|}$, and  $\{i^r_k\}^{|\rho^r|-1}_{k=1}\cap\{i^{r'}_k\}^{|\rho^{r'}|-1}_{k=1}=\emptyset$ for $1\le r,r'\le m$ with $r\ne r'$. Setting coefficient vectors ${\boldsymbol{\pi}^r}=(\pi^r_0,\pi^r_1,\dots,\pi^r_{|\rho^r|})\in\mathbb{R}^{|\rho^r|+1}$ according to \eqref{eqn:pi} for each path $\rho^r$, the following two-sided inequalities are valid for TEP for any $\hat{k}_\rho^r$:
\begin{flalign}\scriptsize  - \min \{ \pi _0^n\}_{n = 1}^m &\le \sum\limits_{(i,j) \in \rho^r} {\pi _j^r} \tilde{P}^0_{ij,k}
\le \min \{ \pi _0^n\} _{n = 1}^m \;\; \text{for } r=1,\dots,m. 
\end{flalign}
\end{lemma}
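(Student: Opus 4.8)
The plan is to reduce this statement to the previous lemma by exploiting the fact that all $m$ paths share the same pair of endpoint buses. First I would invoke the telescoping identity established inside the proof of the first lemma: for each path $\rho^r$, summing the per-line relations \eqref{eqn:flow-1_react} over $(i,j)\in\rho^r$ collapses the left-hand side and yields
\begin{equation*}
\sum_{(i,j)\in\rho^r}\pi^r_j\tilde P^0_{ij,k}=\theta_{i^r_{|\rho^r|}}-\theta_{i^r_0}.
\end{equation*}
Since $i^r_0=i^{r'}_0$ and $i^r_{|\rho^r|}=i^{r'}_{|\rho^{r'}|}$ for all $r,r'$, the right-hand side is the \emph{same} real number for every path; call it $\Delta\theta$. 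Thus the quantity appearing in the middle of the claimed inequality is path-independent, even though its expression in terms of flows differs from path to path.

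Next, I would apply the first lemma to each $\rho^r$ individually (legitimate, as each $\rho^r$ is a directed path over established corridors), obtaining $-\pi^r_0\le\Delta\theta\le\pi^r_0$, i.e. $|\Delta\theta|\le\pi^r_0$, for every $r=1,\dots,m$. Taking the minimum over $n$ on the right gives $|\Delta\theta|\le\min\{\pi^n_0\}_{n=1}^m$. Substituting $\Delta\theta=\sum_{(i,j)\in\rho^r}\pi^r_j\tilde P^0_{ij,k}$ back in, for each fixed $r$ one gets
\begin{equation*}
-\min\{\pi^n_0\}_{n=1}^m\le\sum_{(i,j)\in\rho^r}\pi^r_j\tilde P^0_{ij,k}\le\min\{\pi^n_0\}_{n=1}^m,
\end{equation*}
which is exactly the stated family of two-sided inequalities. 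The conclusion holds for any choice of line-index vectors $\hat k_\rho^r$ because the first lemma already accommodates that freedom.

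I do not expect a genuine obstacle here; the entire content is the observation that each path-sum equals the common bus-angle difference and therefore inherits the tightest of the individual capacity--reactance bounds. The only points needing care are bookkeeping ones: checking that the sign conventions encoded in $\textit{sgn}(i-j)$, in $\tilde P^0_{ij,k}$, and in the coefficient definition \eqref{eqn:pi} remain consistent across paths (they do, since \eqref{eqn:pi} is applied verbatim to each $\rho^r$), and noting that the non-overlapping-intermediate-bus hypothesis is not actually required for \emph{validity} — it merely ensures the paths are genuinely distinct so that $\min_n\pi^n_0$ can strictly improve upon each individual $\pi^r_0$.
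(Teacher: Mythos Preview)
Your proposal is correct and follows essentially the same argument as the paper: both use the telescoping identity from the first lemma to equate each path-sum with the common endpoint bus-angle difference, then take the minimum of the per-path bounds $\pi^r_0$ obtained from that lemma. Your additional remark that the non-overlapping-intermediate-bus hypothesis is not needed for validity is accurate and goes slightly beyond what the paper states.
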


\begin{proof}
Since paths $\rho^r$ and $\rho^{r'}$ share the same starting/ending buses, this gives that $\theta_{i^r_{|\rho^r|}}-\theta_{i^r_0}=\theta_{i^{r'}_{|\rho^{r'}|}}-\theta_{i^{r'}_0}$, or equivalently, with $k$ defined as in Lemma 1,
\begin{flalign*}
\sum\limits_{(i,j) \in \rho^r} \pi_j^r {\tilde{P}}^0_{i^r_{j-1}i^r_j,k} = \sum\limits_{(i,j) \in \rho^{r'}}\pi_j^{r'} {\tilde{P}}^0_{ij,k} \;\;\;
\text{for } 1 \le r \text{, } r' \le m\:{\rm{ }} \text{with}\,\,r \ne r'
\end{flalign*}
according to the respective telescoped bus angle-difference equations of the starting and ending buses associated with each path (e.g., see \eqref{eqn:VI-E-pt1}). Thus, the proof is completed by joining together the two-sided inequalities,
\begin{flalign*}
  -\pi^r_0\le\sum_{(i,j) \in \rho^r}\pi^r_j \tilde{P}^0_{ij, k} \le\pi^r_0 \;\;\; \text{for } r=1,\dots,m,
\end{flalign*}
each of which is valid due to Lemma 1. ${\qed}$
\end{proof}

Continuing the example from subsection 4.1, in Figure \ref{fig:fig2}, $\rho^1$ creates an established parallel path with $\rho^2$. Assuming that path $\rho^2$ is the path with lower capacity-reactance product creates the example type 2 two-sided VI:\vspace{3pt}

$\begin{array}{l}
-{{\bar P}_{{i_0},{i_4}}}\,{x_{{i_0},{i_4}}} - {{\bar P}_{{i_4},{i_5}}}\,{x_{{i_4},{i_5}}} \\
\le {P_{{i_0},{i_1}}}\,{x_{{i_0},{i_1}}} + {P_{{i_1},{i_2}}}\,{x_{{i_1},{i_2}}}
 + {P_{{i_2},{i_5}}}\,{x_{{i_2},{i_5}}} \\
 \le \,{{\bar P}_{{i_0},{i_4}}}\,{x_{{i_0},{i_4}}} + {{\bar P}_{{i_4},{i_5}}}\,{x_{{i_4},{i_5}}} 
\end{array}$ 

\subsection{Parallel Paths over Established and Expansion Corridors}

Consider two buses, $\theta_n$ and $\theta_m$, in a network.  Let $\mathcal{C}$ denote the set of all paths starting at $\theta_n$ and ending at $\theta_m$.  For any path $\rho_r \in \mathcal{C}$, let $CR(\rho_r)= \sum_{(i,j) \in \rho_r} x_{ij} \bar{P}_{ij}$ denote the cumulative capacity-reactance product of one line from each corridor along that path.  Let $\bar{\rho}$ denote a path from this set such that $\overline{CR(\rho)} = \max \{ CR(\rho_r) \}$ and $\underline{\rho}$ similarly denote a path from this set such that $\underline{CR(\rho)} = \min \{ CR(\rho_r) \}$.  Further, let $N_e \left( \rho_r \right)$ denote the number of expansion corridors in the path $\rho_r$.  Note that the theorem below is stated and proved in the context of a network that meets the assumptions of all tested instances for simplicity of presentation: namely that all candidate lines for a given corridor, $(i,j)$ have identical properties (e.g., susceptance, capacity, etc.), so that additionally we can order the candidate lines.  In other words, $y_{ij,k+1} \leq y_{ij,k}$.  However, the result can be easily generalized by considering \textit{line paths}, where the path is along individual lines rather than corridors.  The details of this generalized theorem are provided in the appendix.

\begin{theorem}
The following are valid inequalities for TEP, for all paths $\rho_r \in \mathcal{C}$: 

\begin{equation}
\vert \theta_n - \theta_m \vert \leq CR(\rho_r) + \left( \overline{CR(\rho)} - CR(\rho_r) \right) \left( N_e \left( \rho_r \right) - \sum_{(i,j) \in \rho_r} \mathbb{I}_{ij} y_{ij,1} \right),
\end{equation}
where $\mathbb{I}_{ij}$ is used as shorthand for the indicator function $\mathbb{I}{\left(\omega^0_{i_{j-1},i_j}=0\right)}$ (i.e. to identify expansion corridors).

Furthermore, let $\mathcal{C}^0 \subseteq \mathcal{C}$ denote the set of paths comprised solely of established corridors, with $\rho_r^0$ denoting an element of this set. Additionally, let $\underline{CR(\rho^0)} = \min \{ CR(\rho_r^0) \}$.  If $\mathcal{C}^0$ is nonempty, then the above inequalities can be strengthened as follows:

\begin{equation}
\vert \theta_n - \theta_m \vert \leq CR(\rho_r) + \left( \underline{CR(\rho^0)} - CR(\rho_r) \right) \left( N_e \left( \rho_r \right) - \sum_{(i,j) \in \rho_r} \mathbb{I}_{ij} y_{ij,1} \right)
\end{equation}
\end{theorem}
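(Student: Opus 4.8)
The plan is to read the combinatorial coefficient of the correction term structurally first, and then split into two cases. Because the candidate lines of a corridor are identical and ordered ($y_{ij,k+1}\le y_{ij,k}$), for an expansion corridor $(i,j)$ the product $\mathbb{I}_{ij}y_{ij,1}$ equals $1$ exactly when at least one new line is installed there; hence $N_e(\rho_r)-\sum_{(i,j)\in\rho_r}\mathbb{I}_{ij}y_{ij,1}$ is precisely the number $U_r\in\{0,1,\dots,N_e(\rho_r)\}$ of expansion corridors along $\rho_r$ that receive no new line. The guiding observation is that the right-hand side of either stated inequality collapses to $CR(\rho_r)$ when $U_r=0$, while when $U_r\ge 1$ it is at least $\overline{CR(\rho)}$ (respectively $\underline{CR(\rho^0)}$), since $\overline{CR(\rho)}-CR(\rho_r)\ge 0$ by the maximality of $\overline{CR(\rho)}$. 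So I would establish: (i) if $U_r=0$ then $|\theta_n-\theta_m|\le CR(\rho_r)$; and (ii) if $U_r\ge 1$ then $|\theta_n-\theta_m|$ is bounded by an a priori quantity that the inflated right-hand side absorbs.

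For step (i), I would extend Lemma~1 so that it applies to any path each of whose corridors is either established or carries a built candidate line. When $y_{ij,k}=1$, constraint \eqref{eq_disyuntivo_r_6} collapses to the equality $\frac{-1}{b_{ij,k}}P_{ij,k}-(\theta_i-\theta_j)=0$, which plays exactly the role of \eqref{eq_disyuntivo_r_5} in the proof of Lemma~1, while \eqref{eq_disyuntivo_r_4} supplies $|P_{ij,k}|\le\overline P_{ij,k}$ in place of \eqref{eq_disyuntivo_r_3}. Consequently the telescoping identity and triangle-inequality bounds of \eqref{eqn:VI-E-pt1}--\eqref{eqn:VI-E-pt4} go through verbatim and give $|\theta_n-\theta_m|\le\sum_{(i,j)\in\rho_r}x_{ij}\overline P_{ij}=CR(\rho_r)$ whenever $\rho_r$ is ``functional'' in this sense, i.e.\ whenever $U_r=0$; this is exactly the claimed bound in both versions of the theorem in that case. (The identical-line assumption is what makes $CR$ well defined at the corridor level; the appendix version merely reruns the same argument along individual lines.)

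For step (ii), I would supply the a priori bound on $|\theta_n-\theta_m|$ separately for the two statements. For the strengthened inequality, $\mathcal{C}^0\ne\emptyset$, so I would apply the (original) Lemma~1 to \emph{each} established $n$--$m$ path and keep the tightest of the resulting valid inequalities, obtaining $|\theta_n-\theta_m|\le\underline{CR(\rho^0)}$ — this requires no non-overlap hypothesis as in Lemma~2, since it is an intersection, not a combination, of half-spaces. For the first inequality I would instead invoke the connectivity assumption on the tested networks to exhibit one functional $n$--$m$ path $\rho^{*}$ and apply the extended Lemma~1 to it, getting $|\theta_n-\theta_m|\le CR(\rho^{*})\le\overline{CR(\rho)}$. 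It then remains to verify the coefficient arithmetic: with $U_r\ge 1$, the first right-hand side is $CR(\rho_r)+(\overline{CR(\rho)}-CR(\rho_r))U_r\ge\overline{CR(\rho)}\ge|\theta_n-\theta_m|$, and the strengthened right-hand side equals $\underline{CR(\rho^0)}$ when $U_r=1$. I expect the main obstacle to be closing the strengthened inequality for $U_r\ge 2$: there $\underline{CR(\rho^0)}-CR(\rho_r)$ may be negative, so dominance of the right-hand side depends on how $CR(\rho_r)$ compares to $\underline{CR(\rho^0)}$, and handling this cleanly will likely need either a restriction to the paths $\rho_r$ for which the cut is actually generated, or a segment-by-segment refinement that applies Lemma~1 on the functional stretches of $\rho_r$ together with best-$M$ (shortest-path) bounds across its unbuilt corridors.
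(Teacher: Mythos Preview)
Your two-case split on $U_r:=N_e(\rho_r)-\sum_{(i,j)\in\rho_r}\mathbb{I}_{ij}y_{ij,1}$ is exactly the paper's argument: Case~1 ($U_r=0$) is handled by running the Lemma~1 telescoping with \eqref{eq_disyuntivo_r_6} (collapsed at $y_{ij,1}=1$) in place of \eqref{eq_disyuntivo_r_5}, yielding $|\theta_n-\theta_m|\le CR(\rho_r)$; Case~2 ($U_r\ge 1$) uses the a~priori bound $|\theta_n-\theta_m|\le\overline{CR(\rho)}$ for the first inequality and, via Lemma~1 applied to every established $n$--$m$ path (the paper cites Lemma~2 here, but your observation that no disjointness is needed is correct), $|\theta_n-\theta_m|\le\underline{CR(\rho^0)}$ for the strengthened one.

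The obstacle you flag for the strengthened inequality when $U_r\ge 2$ is real, and the paper's proof does not close it either: the paper's Case~2 establishes only $U_r\ge 1$ and $|\theta_n-\theta_m|\le\underline{CR(\rho^0)}$, then stops, implicitly treating the right-hand side as monotone in $U_r$. That monotonicity fails precisely when $CR(\rho_r)>\underline{CR(\rho^0)}$, and then with $U_r=2$ the right-hand side becomes $2\,\underline{CR(\rho^0)}-CR(\rho_r)<\underline{CR(\rho^0)}$, which can be violated (take two established corridors $n$--$a$--$m$ with $x\bar P=10$ each, and $\rho_r$ through two unbuilt expansion corridors with $x\bar P=15$ each: the inequality asserts $|\theta_n-\theta_m|\le 10$ while the established path allows $20$). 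So your proposal matches the paper's proof in scope and method, and the ``main obstacle'' you anticipate is not a gap in your plan relative to the paper but a gap in the stated theorem itself; the natural fix is to restrict the strengthened cut to paths with $CR(\rho_r)\le\underline{CR(\rho^0)}$, which are in any case the only ones for which it improves on Lemma~2.
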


\begin{proof}

The telescoped bus angle-difference equation \eqref{eqn:VI-E-pt1} can be written if and only if corridors $(i_0,i_1),\dots,(i_{|\rho|-1},i_{|\rho|})$ are each serviced by transmission lines (i.e., all consecutive bus-pairs must be connected).  We then consider two cases: either there are no expansion corridors in $\rho_r$ (or all expansion corridors in $\rho_r$ have at least one candidate line built) or there is at least one expansion corridor in $\rho_r$ with no candidate line built.

\textbf{Case 1: } There are no expansion corridors in $\rho_r$, or all expansion corridors in $\rho_r$ have at least one candidate line built.

Since $\mathbb{I}_{ij,1}=0$ indicates that there are existing lines servicing corridor $(i,j)$, the equation $\left( N_e \left( \rho_r \right) - \sum_{(i,j) \in \rho_r} \mathbb{I}_{ij} y_{ij,1} \right) = 0$ holds if and only the path $\rho_r$ consists entirely of serviced corridors,  that is, for any expansion corridor in the path $\rho_r$, at least one candidate line has been built.  In this case, the arguments from Lemma 1 hold for the path $\rho_r$, and we have that $| \theta_n - \theta_m | \leq \pi_0$. However, note that $\pi_0 = \sum_{(i,j) \in \rho_r}x_{ij} \overline{P}^0_{ij} = CR(\rho_r)$, so in fact we have $| \theta_n - \theta_m | \leq CR(\rho_r)$, for all $r$.

\textbf{Case 2: } There is at least one expansion corridor in $\rho_r$ with no candidate line built.

In this case, we have $\left( N_e \left( \rho_r \right) - \sum_{(i,j) \in \rho_r} \mathbb{I}_{ij} y_{ij,1} \right) \geq 1$.  Then in all cases, the inequality $| \theta_n - \theta_m | \leq \overline{CR(\rho)}$ holds.  That is, the bus angle-difference between $\theta_n$ and $\theta_m$ is bounded by the largest possible cumulative capacity-reactance product along any path between those buses.  Additionally, if $\theta_n$ and $\theta_m$ are connected by any path $\rho_r$ \textit{along established corridors}, then by Lemma 1 we again have $| \theta_n - \theta_m | \leq \pi_0 = CR(\rho_r)$.  In fact, by Lemma 2, given any collection of alternative directed paths, $\{ \rho^1, \dots, \rho^r \}$, we have $| \theta_n - \theta_m | \leq \min \{ \pi _0^k\} _{k = 1}^r$.  Similarly to case 1, note that by selecting $\{ \rho^1, \dots, \rho^r \}$ to be all paths solely along established corridors from $\theta_n$ to $\theta_m$, $\min \{ \pi _0^k\} _{k = 1}^r = \underline{CR(\rho^0)}$, thus the inequality $| \theta_n - \theta_m | \leq \underline{CR(\rho^0)}$ holds.  $\qed$
\end{proof}

As an example of the new valid inequalities described in this theorem, consider again Figure \ref{fig:fig2}.  In this figure, we consider the single paths $\rho^3 := (i_0, i_1), (i_1,i_2), (i_2,i_5), (i_5,i_6), (i_6,i_3)$ and $\rho^4 := (i_0,i_4), (i_4,i_5), (i_5,i_6), (i_6,i_3)$. Additionally, a new single path, $\rho^5$ is created when line $(i_2, i_3)$ is added where $\rho^5 := (i_0, i_1), (i_1,i_2), (i_2,i_3)$ which creates the following VI:  \vspace{3pt}

$\begin{array}{l}
\displaystyle \vert \theta_0 - \theta_3 \vert \leq \sum_{(i,j) \in \rho^5} \bar{P}_{ij} x_{ij} + \\ 
\quad\quad\quad\quad\;\; \left( \min \left\{ \sum_{(i,j) \in \rho^3} \bar{P}_{ij} x_{ij}, \sum_{(i,j) \in \rho^4} \bar{P}_{ij} x_{ij} \right\} - \sum_{(i,j) \in \rho^5} \bar{P}_{ij} x_{ij} \right) \left( 1-y_{i_2,i_3} \right)
\end{array}$

One important result to note about this theorem is how the coefficients on the right hand side relate to the $M_{ij}$ values in \eqref{eq_disyuntivo_r_6}.  Case 1 can be seen as simply summing those constraints in \eqref{eq_disyuntivo_r_6} for each $(i,j) \in \rho_k$, using the best calculated values of big-\textit{M} as described in section 2 (that is, either by a shortest path problem if bus $i$ is connected to bus $j$ or a longest path problem otherwise).  Case 2 allows the conditional use in this summation of the tighter big-\textit{M} calculated by a shortest path problem, \textit{if} enough candidate lines have been built to connect bus $i$ and bus $j$.  This is what permits these VIs to provide a strictly smaller relaxed solution space.

To illustrate the potential of these VIs, consider Figure 1 again but with the line connecting bus $i_1$ to bus $i_2$ as a candidate line instead of an existing line.  Then one VI provided by this theorem is 
\begin{align}
\vert \theta_{i_2} - \theta_{i_0} \vert &\leq 2x \bar{P} + (3x \bar{P} - 2x \bar{P} ) (1 - y_{i_1i_2}) \nonumber\\
\Rightarrow \vert \theta_{i_2} - \theta_{i_0} \vert &\leq 2x \bar{P} + x \bar{P}(1 - y_{i_1i_2}) \label{eqn:VI_effect}
\end{align}

By comparison, the best constraints (including linear combinations of constraints) relating these two buses in the original TEP model are
\begin{align}
&\vert \theta_{i_2} - \theta_{i_0} \vert \leq 4x \bar{P} \label{eqn:VI_base1}\\
&\vert \theta_{i_2} - \theta_{i_1} \vert \leq 4x \bar{P}(1-y_{i_1i_2}) \label{eqn:VI_base2}\\
&\vert \theta_{i_1} - \theta_{i_0} \vert \leq x \bar{P} \label{eqn:VI_base3}\\
&\vert \theta_{i_2} - \theta_{i_0} \vert \leq x \bar{P} + 4x \bar{P}(1 - y_{i_1i_2}) \label{eqn:VI_effect2},
\end{align}
where \eqref{eqn:VI_base1}-\eqref{eqn:VI_base3} are directly from \eqref{eq_disyuntivo_r_5} and \eqref{eq_disyuntivo_r_6}, and \eqref{eqn:VI_effect2} is the sum of \eqref{eqn:VI_base2} and \eqref{eqn:VI_base3}.

If $\mathcal{P}_{LR}$ is the polytope of the linear relaxation of the original TEP model for this simple network, and $\mathcal{P'}_{LR}$ is the polytope of the linear relaxation of the original TEP model together with \eqref{eqn:VI_effect}, then obviously $\mathcal{P'}_{LR} \subseteq \mathcal{P}_{LR}$.  In fact, it can be demonstrated that $\mathcal{P'}_{LR} \subset \mathcal{P}_{LR}$. To find a solution in $\mathcal{P'}_{LR}$, but not in $\mathcal{P}_{LR}$, assume without loss of generality that $\theta_{i_1} \geq \theta_{i_2}$, then the following system of inequalities relating bus angles $\theta_{i0}$ and $\theta_{i2}$ (which represents a point satisfying \eqref{eqn:VI_base1} and \eqref{eqn:VI_effect2} but violating \eqref{eqn:VI_effect}) must be satisfied:
\begin{align}
\theta_{i_2} - \theta_{i_0} &> 2x\bar{P} + x\bar{P} (1- y_{i_1i_2})  \label{ill}\\
\theta_{i_2} - \theta_{i_0} &\leq 3x \bar{P} \label{ill2}\\
\theta_{i_2} - \theta_{i_0} &\leq x\bar{P} + 4x\bar{P}(1-y_{i_1i_2}) \label{ill3}.
\end{align}
By joining the right hand sides of \eqref{ill} with \eqref{ill3} and \eqref{ill2} and \eqref{ill3}, this system must then satisfy
\begin{align}
2x\bar{P} + x\bar{P} (1- y_{i_1i_2}) &< 3x \bar{P} \label{eqn:star}\\
2x\bar{P} + x\bar{P} (1- y_{i_1i_2}) &< x \bar{P} + 4x \bar{P}(1-y_{i_1i_2}) \label{eqn:star2}
\end{align}
It is easy to see that \eqref{eqn:star} is true when $y_{i_1i_2} > 0$ and \eqref{eqn:star2} is true when 
\begin{align}
&\;\;\;\;x \bar{P} < 3x \bar{P}(1-y_{i_1i_2}) \nonumber\\
\Rightarrow &\;\;\;\;\;\;\;1 < 3(1-y_{i_1i_2}) \nonumber\\
\Rightarrow &\;y_{i_1i_2} < 2/3. \nonumber
\end{align}
That is, \eqref{eqn:star} and \eqref{eqn:star2} are both satisfied when $0 < y_{i_1i_2} < 2/3$.  For example, the point $y_{i_1i_2} = 0.5$, $\theta_{i_2} = 2.75 x \bar{P}$, $\theta_{i_0} = 0$ satisfies inequalities \eqref{ill} - \eqref{ill3}, i.e., it is in $\mathcal{P}_{LR}$ but not $\mathcal{P'}_{LR}$.

\section{Tests and Results}

The structure of the experiment and its implementation are as follows:  First, the low-effort heuristic method, explained in section 3, is applied.  The solution flows from the chosen relaxations are then analyzed on the same graph to find single paths of same-direction flows of maximum length using a breadth-first search algorithm. For larger instances, the maximum length of each path and maximum number of paths starting from each bus are capped to prevent memory issues. Paths with the same initial and final bus are combined to form parallel paths.  Once all or, in the case of the particularly large instances, the maximum allowed number of single paths and parallel paths are found, cuts are added to the model from those lists in a random order -- the order cuts are added has an effect on CPLEX's built-in heuristics and can change the solution time.  It should be noted that in each of the tested instances, all candidate lines for a given corridor, $(i,j)$, have identical properties (e.g., susceptance, capacity, cost, etc.).  When this is the case, we can enforce the additional set of symmetry-breaking constraints $y_{ij,k+1} \leq y_{ij,k}$, $\forall k \in \{ 1 \dots \bar{\omega}_{ij}-1\}$, since each line is interchangeable.  First, testing is performed on a version of the IEEE 118-bus system modified from \cite{christie2000power} in order to showcase the potential for the effectiveness of the proposed path-based VIs in a relatively simple and easily replicable context.  This system is also used to detail the distribution of cuts applied from each theorem.  Then, testing is performed on the Polish 2383-bus system in order to show their effectiveness in a more realistically sized and designed instance.  The algorithm is implemented in Python and solves the disjunctive model using CPLEX version 12.8.0.0.  All tests are run on the ASU High Performance Computing Agave Cluster, which has compute nodes with two Intel Xeon E5-2680 v4 CPUs running at 2.40 GHz.

\subsection{IEEE 118-Bus System} 

This system is relatively simple to solve, and in fact the unmodified 118-bus instance already satisfies demand without constructing any additional lines.  To tailor this instance for TEP and add some computational difficulty, we consider the possibility that up to 7 candidate lines with the same characteristics as the existing lines in that corridor may be added, similar to what is done with the Southern Brazilian 46-bus system in \cite{Laura1}.  Additionally, all line ratings have been reduced to 60\% in order to create congestion in the original network, as in \cite{zhang2013transmission}.  Finally, we have chosen at random 30 existing lines to remove from the system.  This allows for the theorem to be effectively applied to this test case.  The resulting system has 118 buses, 54 generators, and 186 corridors, of which 156 possess existing lines, allowing up to 7 candidate lines to be built per corridor which results in 1302 binary decisions.

We tested adding the valid inequalities to the model as initial constraints as well as with CPLEX's \textit{user cut} and \textit{lazy constraint} options \citep{cplusr}.  This was to address the fact that adding a large number of cuts as linear constraints directly via CPLEX produced inconsistent and occasionally large solution times, likely due to rounding errors \citep{escobedo2016foundational}. The number of such cuts added for both the 118-bus and 2383-bus systems is shown in detail in Table \ref{table:num_cuts}, for all potential combinations of relaxation models and broken down according to which theorem was used to generate the cut. In this table as in future tables, TR refers to the transportation relaxation, HR to the hybrid relaxation, and LR to the linear relaxation.  The user cut option allows CPLEX to implement only those inequalities it deems most beneficial at each node of the branch-and-bound process \citep{ostrowski2012tight}.  Lazy constraints behave similarly, but are only added at nodes in which a solution is found which violates those constraints, branch-and-cut style.  Although these options could increase computation time, they produced more consistent results from repeated trials and also demonstrated improvements in overall solve time.

\begin{table}[ht]
\caption{Distribution of Cuts Based on Selected Relaxations }
\label{table:num_cuts}
\begin{center}
\begin{tabular}{|c|c|c|}
\hline
Relaxation Models & 118-bus Cuts & 2383-bus Cuts \\ \hline
 TR  & 310 & 337 \\  \hline
 HR  & 15 & 835 \\  \hline
 LR  & 29 & 468\\  \hline
 TR $\oplus$ HR  & 14 & 380 \\  \hline
 TR $\oplus$ LR  & 21 & 376 \\  \hline
 HR $\oplus$ LR  & 10 &  496 \\  \hline
 TR $\oplus$ HR $\oplus$ LR  & 9 & 389 \\  \hline

\end{tabular} 
\end{center}
\end{table}

Table \ref{table:118 complete constraint} summarizes the complete results from adding all possible VIs to the 118-bus instance as full constraints and then solving. Similarly, Table \ref{table:118 complete user} does the same when adding the VIs as CPLEX user cuts.  These two options are presented as results since they proved more effective than the lazy constraint option in this instance.  For these table and for future tables, N/A refers to the time spent solving the model with no VIs added.  Additionally, the column Relax Time refers to the total time spent solving the subset of relaxation models, the column Path Search refers to the total time finding all paths and parallel paths after overlaying the solutions of the relaxation models onto the network, and the column Solution refers to the time spent solving the original problem after adding all possible VIs.  Finally, the C+P+R column is the total time spent on this whole process.  Note that all entries refer to the average runtime in seconds.

\begin{table}[ht]
\caption{IEEE 118-Bus Results Adding VIs As Constraints}
\label{table:118 complete constraint}
\begin{center}
\begin{tabular}{|c|rrrr|}
\hline
Relaxation & \multicolumn{4}{c|}{Average Computation Times (s)} \\
   
Models&Relax Time&Path Search&Solution&C+P+R\\  \hline
TR & 7.85 & 0.16 & 736.36 & 744.38 \\  \hline
HR & 11.56 & 0.15 & 392.20 & \bf 403.91 \\  \hline
LR & 0.34 & 0.13 & 799.56 & 800.03 \\  \hline
TR $\oplus$ HR & 19.00 & 0.13 & 927.17 & 946.29 \\  \hline
TR $\oplus$ LR & 8.19 & 0.12 & 1118.82 & 1127.13 \\  \hline
HR $\oplus$ LR & 11.76 & 0.14 & 783.64 & 795.53 \\  \hline
TR $\oplus$ HR $\oplus$ LR & 38.05 & 0.12 & 906.34 & 944.51 \\  \hline
 N/A	&-\;\;\;&-\;\;& 1702.08 &-\;\;\;\;\\	\hline 				

\end{tabular}  
\end{center}
\end{table}

\begin{table}[ht]
\caption{IEEE 118-Bus Results Adding VIs As User Cuts}
\label{table:118 complete user}
\begin{center}
\begin{tabular}{|c|rrrr|}
\hline
Relaxation & \multicolumn{4}{c|}{Average Computation Times (s)} \\
   
Models&Relax Time&Path Search&Solution&C+P+R\\  \hline
TR & 7.90 & 0.16 & 857.94 & 866.00\\ \hline 
HR & 11.47 & 0.15 & 851.10 & 862.72\\ \hline 
LR & 0.37 & 0.13 & 852.04 & \bf 852.53\\ \hline 
TR $\oplus$ HR & 19.06 & 0.13 & 855.24 & 874.42\\ \hline 
TR $\oplus$ LR & 8.42 & 0.12 & 866.48 & 875.03\\ \hline 
HR $\oplus$ LR & 11.62 & 0.14 & 865.50 & 877.26\\ \hline 
TR $\oplus$ HR $\oplus$ LR & 37.59 & 0.12 & 866.61 & 904.31\\ \hline
 N/A	&-\;\;\;&-\;\;& 1702.08 &-\;\;\;\;\\	\hline 				

\end{tabular}  
\end{center}
\end{table}

We can see from these tables that solving the modified 118-bus instance without adding any VIs took on average 1702.08 seconds.  In comparison, the best average total solve time including finding and implementing all VIs took 403.91 seconds, roughly a 4.2x improvement.  This time comes from solving only the hybrid relaxation and implementing the VIs as constraints directly in the model before solving in CPLEX. When adding VIs this way, the fastest individual total solve time was 306.77 (a roughly 5.6x improvement) seconds, but the slowest was 1340.21 seconds (only a 1.3x improvement). However, note that when adding the VIs as user cuts, the solution times do not vary much across all combination of relaxation models.  This is true even across individual solves: the corresponding fastest and slowest individual total solves with user cuts were 848.65 seconds and 906.74.  So while the best case improvement implementing the VIs as user cuts was only approximately 2x, the worst case performed similarly.  Thus for the 118-bus instance, user cuts do not have as much potential to improve solution times as direct constraints, but they do produce improvements with less variance.  

\subsection{Polish 2383-Bus System}

We use the Polish 2383-bus system adapted for TEP in \cite{min18sol}. This system has 2383 buses, 327 generators, and 2896 total corridors.  This system has been modified as follows: while the original 2383-bus system has candidate lines along established corridors, these options were removed and 120 of the existing lines have been removed and replaced with one candidate line each, while the remaining 2776 corridors do not allow for any expansion.  This modified instance is available upon request from the corresponding author.  Due to the size of this instance, additional testing restrictions were introduced. Limits were placed on the path-finding algorithm, permitting only 1000 paths to be found per starting bus and allowing only paths of 20 buses or fewer in length.  Although there are fewer binary variables in this instance than in the modified 118-bus system described above, only 120 in all, it is much more complex.  

\begin{table}[ht]
\caption{2383-Bus Results Adding VIs As Constraints}
\label{table:2383 constraint}
\begin{center}
\begin{tabular}{|c|rrrr|}
\hline
Relaxation & \multicolumn{4}{c|}{Average Computation Times (s)} \\
   
Models&Relax Time&Path Search&Solution&C+P+R\\  \hline
TR & 1.51 & 111.53 & 1881.91 & 1994.94  \\ \hline
HR & 31.75 & 107.83 & 2149.84 & 2289.42  \\ \hline
LR & 2.65 & 107.42 & 2931.46 & 3041.53  \\ \hline
TR $\oplus$ HR & 18.05 & 87.02 & 2663.78 & 2768.85  \\ \hline
TR $\oplus$ LR & 4.18 & 67.26 & 2167.01 & 2238.45  \\ \hline
HR $\oplus$ LR & 35.05 & 91.84 & 2773.98 & 2900.87  \\ \hline
TR $\oplus$ HR $\oplus$ LR & 36.30 & 58.72 & 1589.51 & \bf 1684.53  \\ \hline	
 N/A	&-\;\;\;&-\;\;\;& 5671.30 &-\;\;\;\;\;\\ \hline

\end{tabular}  
\end{center}
\end{table}

\begin{table}[ht]
\caption{2383-Bus Results Adding VIs As Lazy Constraints}
\label{table:2383 lazy constraint}
\begin{center}
\begin{tabular}{|c|rrrr|}
\hline
Relaxation & \multicolumn{4}{c|}{Average Computation Times (s)} \\
   
Models&Relax Time&Path Search&Solution&C+P+R\\  \hline
TR & 1.55 & 111.69 & 3279.26 & 3392.50  \\ \hline
HR & 32.51 & 108.49 & 3275.41 & 3416.41  \\ \hline
LR & 2.63 & 108.38 & 3270.65 & 3381.66  \\ \hline
TR $\oplus$ HR & 18.11 & 87.14 & 3278.83 & 3384.09  \\ \hline
TR $\oplus$ LR & 4.12 & 67.16 & 3281.24 & \bf 3352.53  \\ \hline
HR $\oplus$ LR & 34.95 & 91.55 & 3281.00 & 3407.50  \\ \hline
TR $\oplus$ HR $\oplus$ LR & 35.82 & 59.71 & 3282.46 & 3377.99  \\ \hline
 N/A	&-\;\;\;&-\;\;\;& 5671.30 &-\;\;\;\;\;\\ \hline

\end{tabular}  
\end{center}
\end{table}

As illustrated in Table \ref{table:2383 constraint}, solving any subset of relaxation models and adding a number of VIs generated from overlaying their solutions measurably reduces the average time spent solving TEP for the 2383-bus system.  Due to the time restrictions implemented in the path-finding algorithm, this is only a subset of all possible paths from which to generate VIs; however, we remark that the addition of all such VIs may be impractical from a computational standpoint, due to the exponential growth in the number of possible paths on which to base them.  In this case, solving all three relaxations produced the greatest reduction in both total solution time and in solution time not including time spent searching for paths and solving relaxations.  While the original instance took 5671.30 seconds to solve on average without adding any valid inequalities, this greatest reduction took only 1684.53 seconds, which is approximately a 3.4x speed up.  Note that in the case when only the linear relaxation solved, the equivalent average time was 3041.53 seconds, the slowest of all options and only a 1.9x improvement.  This suggests that solving multiple relaxation models, rather than just the traditional linear relaxation, can produce significant improvements to solution algorithms for TEP.  These results show the effectiveness of the proposed VIs even on systems of large size.  However, note that as in the case of the 118-bus instance, the best individual solve time was a substantial improvement over these averages.  In fact, that time was 730.54 seconds, a 7.8x improvement.  In one case out of seventy, these cuts showed no improvement, while in all other cases they showed at least a 1.8x improvement, similar to the 118-bus case.

As illustrated in Table \ref{table:2383 lazy constraint}, results from another option for adding the VIs is presented.  However, in this case these results show solution times when they are added as lazy constraints, since for the 2383-bus instance the user cut option showed the least improvements, in contrast to the 118-bus case.  That said, the effects of the alternative option are similar: best-case improvements are reduced in favor of increased consistency of improvements.

\section{Conclusions and Future Work}

This work presents a new mathematical framework and an algorithm that uses a mixed-integer linear programming model, valid inequalities, and a low-effort heuristic method for solving TEP. The objective is to reduce the total computational effort of planning.  This work is a significant improvement of the preliminary studies carried out in \cite{Laura1}, in which the solutions were found after manual analysis of the test system, creation of cuts using two classes of the valid inequalities introduced in this paper (specifically from Lemmas 1 and 2), which at that time had been implemented without proof, and tests made with different cut combinations. However, this work automates each step of the process and formally establishes the validity of three types of valid inequalities.  

Computational tests show the effectiveness of the presented theorem in generating valid inequalities which reduce the solution time of TEP, up to an 8x improvement.  They also suggest how to best apply the theorem for use in solving multiple test cases, as well as how they may be of use in the solution of larger scale problems.  Additionally, the results demonstrate different options for the implementation of these valid inequalities that offer distinct trade-offs in efficiency in the various stages of the solution process, which provides options for approaching instances of varying sizes and expected computational effort.  

In future work, we will perform a polyhedral study on the strength of the proposed VIs and we will conduct further studies to determine the most effective use of the presented theorems for particular instances. In particular, as the size of a system increases, the number of possible paths, and thus the number of possible valid inequalities, increases at an exponential rate. Finding and adding all these inequalities takes significant computational time, and the sheer number added does not necessarily improve the performance of solving via CPLEX.  Thus, additional testing is planned to determine how to select an ideal subset of single path and parallel path inequalities to help decrease total solution time, particularly in large systems.  Fine tuning of the implementation, including in regards to the optimal use of user cuts and lazy constraints, such as this will allow us to solve more complex problems, such as the L-1 reliability on TEP \citep{Lauraepec} and planning with uncertainty due to renewables as well as incorporating new technology such as FACTS devices \citep{sahraei2015fast}.

\section{Appendix}

In order to generalize Theorem 1, we introduce new definitions.  Given a path $\rho_k$, a \textit{line path} $\ell_k$ is a sequence of exactly one line per corridor $(i,j) \in \rho_k$.  The $k^{th}$ line in corridor $(i,j)$ will be denoted $(i,j,k)$ for the purposes of a line path.  For example, in a network with 3 lines per corridor, the simple path $\rho =(1,2), (2,3)$ might have line paths $\ell_1 = (1,2,1), (2,3,3)$, $\ell_2 = (1,2,3), (2,3,3)$, or $\ell_3 = (1,2,2), (2,3,2)$.  That is, $\ell_1$ is comprised of the first line from corridor $(1,2)$ and the third line from corridor $(2,3)$.  In this basic case, there are 9 possible such line paths corresponding to the path $\rho$.  Additionally, an \textit{established line path} is a line path composed entirely of existing lines, hence it corresponds to a path composed of only established paths. Let $\mathcal{C}_\ell$ be the set of all line paths.  Let $N_{e}(\ell_k)$ denote the number of candidate lines in the line path $\ell_k$, when $N_e$ is applied as a function to a line path instead of a path.  Let $\mathbb{I}_{ijk}$ represent the indicator function for candidate lines (i.e., $\mathbb{I}_{ijk} = 1$ means that line $(i,j,k)$ is a candidate line).  Given the above definitions and notations, the theorem below follows immediately from Theorem 1. 

\begin{theorem}
The following are valid inequalities for TEP, for all line paths $\ell_k \in \mathcal{C}_\ell$:
\begin{equation}
\vert \theta_n - \theta_m \vert \leq CR(\ell_k) + \left( \overline{CR(\ell)} - CR(\ell_k) \right) \left( N_e \left( \ell_k \right) - \sum_{(i,j,r) \in \ell_k} \mathbb{I}_{ijr} y_{ij,r} \right).
\end{equation}
Furthermore, let $\mathcal{C}^0 \subseteq \mathcal{C}$ denote the set of paths comprised solely of established corridors, with $\ell_k^0$ denoting an element of this set. Additionally, let $\underline{CR(\ell^0)} = \min \{ CR(\ell_k^0) \}$.  If $\mathcal{C}^0$ is nonempty, then the above inequalities can be strengthened as follows:

\begin{equation}
\vert \theta_n - \theta_m \vert \leq CR(\ell_k) + \left( \underline{CR(\ell^0)} - CR(\ell_k) \right) \left( N_e \left( \ell_k \right) - \sum_{(i,j,r) \in \ell} \mathbb{I}_{ijr} y_{ij,r} \right)
\end{equation}
\end{theorem}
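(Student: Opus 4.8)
The plan is to deduce Theorem~2 from Theorem~1 by the relabeling that turns every transmission line into its own corridor. Given a line path $\ell_k$, replace each corridor $(i,j)$ that it traverses by parallel corridors between $i$ and $j$, one per line, each carrying a single line with its own capacity $\bar{P}_{ij,r}$, reactance $x_{ij,r}$, and big-$M$; a candidate line $(i,j,r)\in\ell_k$ becomes a corridor whose unique candidate line is governed by the single binary $y_{ij,r}$. In the relabeled network $\ell_k$ is an ordinary directed path over corridors, $N_e(\ell_k)$ is the number of expansion corridors on it, and $\sum_{(i,j,r)\in\ell_k}\mathbb{I}_{ijr}y_{ij,r}$ is exactly the $\sum_{(i,j)\in\rho_r}\mathbb{I}_{ij}y_{ij,1}$ term of Theorem~1, since each such corridor has a unique candidate line. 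Applying Theorem~1 in the relabeled network therefore yields precisely the inequalities claimed here, and translating the statement back to the original corridors completes the argument.

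Making this rigorous amounts to checking that the proofs of Lemma~1, Lemma~2, and Theorem~1 never use (i)~simplicity of the path or the graph, (ii)~the assumption that all candidate lines of a corridor share identical properties, or (iii)~the ordering constraints $y_{ij,k+1}\le y_{ij,k}$. Inspection confirms this: the telescoping identity \eqref{eqn:VI-E-pt1}, the bounds \eqref{eqn:VI-E-pt2}--\eqref{eqn:VI-E-pt4}, the ``minimum over parallel paths'' step of Lemma~2, and the Case~1/Case~2 split of Theorem~1 all rest only on the \emph{per-line} constraints \eqref{eq_disyuntivo_r_3}--\eqref{eq_disyuntivo_r_6}. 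Equivalently, and this is the form I would actually write out, one transcribes the proof of Theorem~1 with the corridor index $(i,j)$ refined to the line index $(i,j,r)$ selected by $\ell_k$: for a built candidate line \eqref{eq_disyuntivo_r_6} with $y_{ij,r}=1$ collapses to the exact equality $x_{ij,r}P_{ij,r}=\theta_i-\theta_j$, matching \eqref{eq_disyuntivo_r_5} for existing lines, while \eqref{eq_disyuntivo_r_4} bounds $|P_{ij,r}|$ by $\bar{P}_{ij,r}$; so telescoping along $\ell_k$ and bounding each flow by its own capacity gives $|\theta_n-\theta_m|\le CR(\ell_k)$ whenever $N_e(\ell_k)-\sum_{(i,j,r)\in\ell_k}\mathbb{I}_{ijr}y_{ij,r}=0$, and when that quantity is $\ge1$ one falls back on $|\theta_n-\theta_m|\le\overline{CR(\ell)}$ (valid via any line path that is fully realized in the current solution, one of which exists whenever $n$ and $m$ lie in the same component, as in the connected networks assumed throughout) or on $|\theta_n-\theta_m|\le\underline{CR(\ell^0)}$ from Lemma~2 applied to established line paths when $\mathcal{C}^0\ne\emptyset$.

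The step I expect to need the most care is the verification, in Case~2, that the affine right-hand side stays at least $|\theta_n-\theta_m|$ for \emph{every} admissible value of the correction coefficient $c=N_e(\ell_k)-\sum_{(i,j,r)\in\ell_k}\mathbb{I}_{ijr}y_{ij,r}$. For the first inequality this is immediate: $CR(\ell_k)\le\overline{CR(\ell)}$ makes the coefficient of $c$ nonnegative, so the right-hand side is minimized at $c=1$, where it equals $\overline{CR(\ell)}\ge|\theta_n-\theta_m|$. For the strengthened inequality the coefficient $\underline{CR(\ell^0)}-CR(\ell_k)$ may be negative, so $c\in\{0,1\}$ are again immediate (they reproduce the Case~1 bound and the Lemma~2 bound, respectively) but a line path carrying two or more unbuilt candidate lines with $CR(\ell_k)>\underline{CR(\ell^0)}$ has to be examined separately, exactly as in Theorem~1's Case~2. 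Once this bookkeeping is settled for the relabeled network, the rest is a routine transcription of the established-corridor arguments and introduces no new idea beyond Theorem~1.
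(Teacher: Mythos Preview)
Your approach is essentially the paper's: the authors present Theorem~2 as an immediate corollary of Theorem~1, obtained by replacing corridors with individual lines, and give no further argument beyond the sentence ``the theorem below follows immediately from Theorem~1.'' Your relabeling argument and your check that the Lemma~1/Lemma~2/Theorem~1 proofs use only the per-line constraints \eqref{eq_disyuntivo_r_3}--\eqref{eq_disyuntivo_r_6} are exactly the content of that reduction, spelled out more carefully than the paper does; the subtlety you flag in Case~2 for the strengthened inequality when $c\ge 2$ and $\underline{CR(\ell^0)}-CR(\ell_k)<0$ is not addressed in the paper's proof of Theorem~1 either, so you are at least as rigorous as the source.
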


\bibliographystyle{ijocv081}
\bibliography{References}

\end{document}